\theoremstyle{definition}
\newtheorem*{notation*}{Notation}
\newcommand{\freccia}[3]{#2 \colon #1  \to #3}
\newcommand{\duefreccia}[3]{\xymatrix@C=0.5cm{#2 \colon #1  \ar@{=>}[r] &  #3}}
\newcommand{\comsquare}[8]{ \xymatrix@+1pc{ 
#1 \ar[r]^{#5} \ar[d]_{#6} & #2 \ar[d]^{#7} \\
#3 \ar[r]_{#8} & #4 
}}
\newcommand{\pullback}[8]{ \xymatrix@+1pc{ 
#1 \pullbackcorner \ar[r]^{#5} \ar[d]_{#6} & #2 \ar[d]^{#7} \\
#3 \ar[r]_{#8} & #4 
}}
\newcommand{\quadratocomm}[8]{ \xymatrix@+1pc{ 
#1 \ar[r]^{#5} \ar[d]_{#6} & #2 \ar[d]^{#7} \\
#3 \ar[r]_{#8} & #4 
}}
\newcommand{\comsquarelargo}[8]{ \xymatrix@+1pc{ 
#1 \ar[rr]^{#5} \ar[d]_{#6} && #2 \ar[d]^{#7} \\
#3 \ar[rr]_{#8} && #4 
}}
\newcommand{\parallelmorphisms}[4]{\xymatrix@+1pc{
#1 \ar @<+4pt>[r]^{#2} \ar @<-4pt>[r]_{#3} & #4
}}
\newcommand{\relation}[4]{\xymatrix@+1pc{
\angbr{#2}{#3}\colon #1 \ar @<+4pt>[r] \ar @<-4pt>[r] & #4
}}
\newcommand{\frecceparalleleopposte}[4]{\xymatrix@+1pc{
#1 \ar@<+4pt>[r]^{#2} \ar@<-4pt>@{<-}[r]_{#3} & #4
}}
\newcommand{\equalizer}[6]{\xymatrix@+1pc{
#1 \ar[r]^{#2} & #3 \ar @<+4pt>[r]^{#4} \ar @<-4pt>[r]_{#5} & #6
}}
\newcommand{\coequalizer}[6]{\xymatrix@+1pc{
 #1 \ar @<+4pt>[r]^{#2} \ar @<-4pt>[r]_{#3} & #4 \ar[r]^{#5} & #6
}}
\newcommand{\subobject}[3]{\xymatrix{
#1 \ar@{>->}[r]^{#2} & #3
}}
\newcommand{\pullbackcorner}[1][ul]{\save*!/#1+1.2pc/#1:(1,-1)@^{|-}\restore}
\def\mC{\mathcal{C}}
\def\mD{\mathcal{D}}
\def\Set{\mathbf{Set}}
\def\Rel{\mathbf{Rel}}
\def\id{\operatorname{ id}}         
\newcommand{\angbr}[2]{\langle #1,#2 \rangle}
\tikzstyle{nodonero}=[fill=black, draw=black, shape=circle]
\tikzstyle{box}=[fill=white, draw=black, shape=rectangle]
\tikzstyle{medium box}=[fill=white, draw=black, shape=rectangle, minimum width=0.7cm, minimum height=0.7cm]
\tikzstyle{bn}=[fill=black, draw=black, shape=circle, inner sep=1.5pt]
\tikzstyle{state}=[fill=white, draw=black, regular polygon, regular polygon sides=3, minimum width=0.8cm, shape border rotate=180, inner sep=0pt]
\tikzstyle{costate}=[fill=white, draw=black, regular polygon, regular polygon sides=3, minimum width=0.8cm, inner sep=0pt]
\tikzstyle{comp}=[fill={rgb,255: red,191; green,0; blue,64}, draw={rgb,255: red,191; green,0; blue,64}, shape=circle, inner sep=1.5pt]
\tikzstyle{ds}=[-, dashed, dash pattern=on 1mm off 1mm]
\theoremstyle{plain} 
\newtheorem{mytheorem}{Theorem}[section]
\newtheorem{mylemma}[mytheorem]{Lemma}
\newtheorem{myproposition}[mytheorem]{Proposition}
\theoremstyle{definition} 
\newtheorem{mydefinition}[mytheorem]{Definition}
\newtheorem{myremark}[mytheorem]{Remark}
\newtheorem{myexample}[mytheorem]{Example}
\title{Weakly Markov categories\\ and weakly affine monads}
\titlerunning{Weakly Markov categories and weakly affine monads}
\author{Tobias Fritz}{Department of Mathematics, University of Innsbruck, AT}{tobias.fritz@uibk.ac.at}{}{FWF P 35992-N}
\author{Fabio Gadducci}{Department of Computer Science, University of Pisa, Pisa, IT}{fabio.gadducci@unipi.it}{https://orcid.org/
0000-0003-0690-3051}{MIUR PRIN 2017FTXR ``IT-MaTTerS''.}
\author{Paolo Perrone}{Department of Computer Science, University of Oxford, UK}{paolo.perrone@cs.ox.ac.uk}{https://orcid.org/0000-0002-9123-9089}{ERC Grant ``BLaSt -- A Better Language for Statistics''.}
\author{Davide Trotta}{Department of Computer Science, University of Pisa, Pisa, IT}{trottadavide92@gmail.com}{https://orcid.org/0000-0003-4509-594X}{MIUR PRIN 2017FTXR ``IT-MaTTerS''.}
\authorrunning{T. Fritz \emph{et alii}}
\keywords{String diagrams, gs-monoidal and Markov categories, categorical probability, affine monads.} 
\begin{document}

\maketitle

\begin{abstract}
   Introduced in the 1990s in the context of the algebraic approach to graph rewriting,
   gs-monoidal categories are symmetric monoidal categories
   where each object is equipped with the structure of a commutative comonoid. They arise for example as
   Kleisli categories of commutative monads on cartesian categories,
   and as such they provide a general framework for effectful computation.
   Recently proposed in the context of categorical probability, Markov categories are
   gs-monoidal categories where the monoidal unit is also terminal, and they arise for example as
   Kleisli categories of commutative \emph{affine} monads, where affine means that the monad preserves the monoidal unit.

   The aim of this paper is to study a new condition on the gs-monoidal structure, resulting in the concept of \emph{weakly Markov categories},
   which is intermediate between gs-monoidal categories and Markov ones.
   In a weakly Markov category, the morphisms to the monoidal unit are not necessarily unique, but form a group.
   As we show, these categories exhibit a rich theory of conditional independence for morphisms, generalising the known theory for Markov categories.
   We also introduce the corresponding notion for commutative monads, which we call weakly affine, and for which we give two equivalent
   characterisations.

   The paper argues that these monads are relevant to the study of categorical probability.
   A case at hand is the monad of finite non-zero measures, which is weakly affine but not affine.
   Such structures allow to investigate probability without normalisation within an elegant categorical framework.
\end{abstract}

\section{Introduction}

The idea of \emph{gs-monoidal categories}, which are symmetric monoidal categories equipped with copy and discard morphisms making every object a comonoid,
was first introduced in the context of algebraic approaches to term graph rewriting~\cite{CorradiniGadducci97}, and then
developed in a series of papers \cite{CorradiniGadducci99, CorradiniGadducci99b, CorradiniGadducci02}.
Two decades later, similar structures have been rediscovered independently in the context of categorical probability theory, in particular in \cite{cho_jacobs_2019}
and \cite{Fritz_2020}, under the names of \emph{copy-discard (CD) categories} and \emph{Markov categories}.
While ``CD-categories'' and ``gs-monoidal categories'' are synonyms, Markov categories have the additional condition that the monoidal unit is the terminal object
(i.e.~every morphism commutes with the discard maps), a condition corresponding to normalisation of probability. See~\cite[Remark~2.2]{fritz2022}
for a more detailed history of these ideas.

A canonical way of obtaining a gs-monoidal category is as the Kleisli category of a commutative monad on a cartesian monoidal category.
As argued in \cite{kock2012distributions}, commutative monads can be seen as generalising theories of distributions of some kind, and the fact that their Kleisli categories are gs-monoidal can be seen as the correspondence between distributions and (possibly unnormalised) probability theory.
In particular, when the monad is affine (i.e.~it preserves the monoidal unit \cite{Kock71,Jacobs1994}), the Kleisli category is Markov -- this can be seen as the correspondence between normalised distributions and probability theory.

In this work we introduce and study an intermediate notion between gs-monoidal and Markov categories, which we call \emph{weakly Markov categories}.
These are defined as gs-monoidal categories where for every object its morphisms to the monoidal unit form a group (\Cref{defweaklymarkov}).
Weakly Markov categories can be interpreted intuitively as gs-monoidal categories where each morphism is discardable up to an invertible normalisation (see \Cref{weaklydiscardable} for the precise mathematical statement). The choice of the name is due to the fact that every Markov category is (trivially) weakly Markov.

In parallel to weakly Markov categories, we also introduce \emph{weakly affine monads}, which are commutative monads on cartesian monoidal categories preserving the (internal) group structure of the terminal object (\Cref{defweaklyaffine}).
As a particular concrete example of relevance to probability and measure theory, we consider the monad of finite non-zero measures on $\Set$ (\Cref{ex:nonzero_measures}), and we use it as a running example in the rest of the work.
As we show (see \Cref{weaklyboth}), a commutative monad on a cartesian monoidal category is weakly affine if and only if its Kleisli category is weakly Markov, analogously to what happens with affine monads and Markov categories.

Markov categories come equipped with a notion of \emph{conditional independence}, which has been one of the main motivations for their use in categorical probability and statistics \cite{cho_jacobs_2019,Fritz_2020,fritz2022dseparation}.
It is noteworthy that a notion of conditional independence can also be given for any gs-monoidal category. As we show, for weakly Markov categories it has convenient properties that can be considered ``up-to-normalisation'' versions of their corresponding Markov-categorical counterpart.
These concepts allow us to provide an equivalent condition for weak affinity of a monad, namely a pullback condition on the associativity diagram of the structural morphisms $c_{X,Y}:TX\times TY\to T(X\times Y)$ (\Cref{mainthm}), widely generalising the elementary statement that a monoid is a group if and only if its associativity diagram is a pullback (\Cref{assoc_group}).
As such, we believe that weakly affine monads are relevant to the study of categorical probability,
as they allow to investigate probability without normalisation within an elegant categorical framework.

\paragraph*{Categorical probability}

In the past few years, we have seen a rapid increase in the interest for categorical methods in probability and information theory, and we briefly sketch the basic ideas in order to provide context for this paper.

The first works on categories of stochastic maps, almost as old as category theory and information theory themselves, have been proposed by Lawvere~\cite{lawvere_unpublished} and independently by Chentsov~\cite{chentsov_first}.
Subsequently, similar intuitions have been expressed in terms of monads by Giry~\cite{Giry82}, as well as by \'Swirszcz in the context of convex analysis~\cite{swirszcz}, and by Jones and Plotkin in computer science~\cite{jones-plotkin}.
These monads are collectively and informally known as \emph{probability monads}.

Today, Markov categories \cite{cho_jacobs_2019,Fritz_2020}, which generalise categories of stochastic maps and Kleisli categories of probability monads, have been used to express probabilistic concepts \emph{synthetically}, i.e.~in terms of basic axioms that the categories satisfy, and of which the usual measure-theoretic proofs are a concrete instance.

The advantages of a categorical approach to probability theory are multiple. First of all, it facilitates an almost-verbatim translation of probabilistic ideas into programming languages, in particular probabilistic programming languages, even in the case of highly complex models.
Also, categorical probability comes with a graphical formalism similar to the one of Bayesian networks (see~\cite{fritz2022dseparation} for more details), allowing to represent the structure of stochastic interactions in terms of a graph, for easier interpretation by a human. It is a high-order language, in the sense that it expresses visually some ideas of measure-theoretic significance without requiring measure theory itself, similar to how high-level programming languages spare the programmer from working directly with machine code.
Finally, the categorical formalism \emph{complements} the traditional measure-theoretic one in the sense that several concepts which are hard to express or prove with one method are easier to approach using the other method, once the main structures are in place. In this sense, categorical probability is a novel, additional box of tools which provides shortcuts to proofs that would otherwise be lengthy and counterintuitive.

Among concepts that have been expressed and proven this way, we have
de Finetti's theorem~\cite{fritz2021definetti},
the Kolmogorov extension theorem and the Kolmogorov and Hewitt-Savage zero-one laws~\cite{fritzrischel2019zeroone},
a categorical d-separation criterion~\cite{fritz2022dseparation},
theorems on multinomial and hypergeometric distributions~\cite{jacobs2021multinomial},
theorems on sufficient statistics~\cite{Fritz_2020} and on comparison of statistical experiments~\cite{fritz_2020_B},
data processing inequalities in information theory~\cite{ours_entropy},
the ergodic decomposition theorem in dynamical systems~\cite{moss2022ergodic},
and results on fresh name generation in theoretical computer science~\cite{fritz2022dilations}.

\paragraph*{Outline}
In \Cref{secbackground} we review the main structures used in this work, in particular group and monoid objects, gs-monoidal and Markov categories, and their interaction with commutative monads.

In \Cref{secweakly} we define the main original concepts, namely weakly Markov categories and weakly affine monads. We study their relationship and we prove that a commutative monad on a cartesian monoidal category is weakly affine if and only if its Kleisli category is weakly Markov (\Cref{weaklyboth}).
We then turn to concrete examples using finite measures and group actions (\Cref{secexamples}).

In \Cref{secindep} we extend the concept of conditional independence from Markov categories to general gs-monoidal categories. We specialise to the weakly Markov case and show that the situation is then similar to what happens in Markov categories, but in a certain precise sense only up to normalisation.
We use this formalism to equivalently reformulate weak affinity in terms of a pullback condition (\Cref{mainthm}). Together with the newly introduced concepts,
this result can be considered the main outcome of our work.

Finally, in the concluding \Cref{secfurther}, we pose further questions, such as when we can iterate the construction of weakly Markov categories by means of weakly affine monads, and the relation to strongly affine monads in the sense of Jacobs~\cite{Jacobs16}.

\section{Background}
\label{secbackground}

In this section, we develop some relevant background material for later reference.
To begin, the following categorical characterisation of groups will be useful to keep in mind.
\begin{myproposition}\label{assoc_group}
 A monoid $(M,m,e)$ in $\Set$ is a group if and only if the associativity square
 \begin{equation}\label{assoc}
  \begin{tikzcd}
   M \times M \times M \ar{d}{\id\times m} \ar{r}{m\times\id} & M\times M \ar{d}{m} \\
   M\times M \ar{r}{m} & M
  \end{tikzcd}
 \end{equation}
 is a pullback.
\end{myproposition}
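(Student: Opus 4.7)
The plan is to unwind the universal property of the pullback into a concrete statement about elements of $M$, and then verify both implications by elementary manipulations.

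First, I would spell out what it means for the square to be a pullback in $\Set$. The pullback of $m$ along itself consists of pairs $((a,b),(c,d)) \in (M\times M)^2$ with $ab = cd$. An element $(x,y,z)\in M\times M\times M$ maps to such a pair via $(\id\times m)(x,y,z) = (x,yz)$ and $(m\times\id)(x,y,z) = (xy,z)$. Hence the pullback condition is equivalent to the following elementary assertion: for all $a,b,c,d\in M$ with $ab = cd$, there exists a unique $y\in M$ such that $ay = c$ and $yd = b$ (the remaining coordinates $x = a$ and $z = d$ are forced).

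For the ($\Rightarrow$) direction, assume $M$ is a group. Given $a,b,c,d$ with $ab=cd$, the element $y := a^{-1}c$ satisfies $ay=c$ by construction, and $yd = a^{-1}cd = a^{-1}ab = b$. Uniqueness is immediate because $ay=c$ determines $y$ by left-cancellation. For the ($\Leftarrow$) direction, assume the pullback condition. To exhibit an inverse of an arbitrary $a\in M$, I would apply the condition to the quadruple $(a,e,e,a)$, which satisfies the hypothesis $a\cdot e = e\cdot a$. The conclusion yields a unique $y\in M$ with $ay = e$ and $ya = e$, i.e.\ a two-sided inverse of $a$. Since $a$ was arbitrary, $M$ is a group.

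There is no real obstacle here: the argument is entirely elementary once the universal property is translated into the existence-and-uniqueness statement for $y$. The only subtle point to flag is that the cleverness lies in the choice of the test quadruple $(a,e,e,a)$ in the ($\Leftarrow$) direction, which is what makes both equations simultaneously say that $y$ inverts $a$ on either side. This is also the template that will generalise later in the paper to the pullback characterisation of weakly affine monads.
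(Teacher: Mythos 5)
Your proof is correct and follows essentially the same route as the paper's: both translate the pullback property into the element-wise existence-and-uniqueness statement for the middle coordinate, and both obtain the inverse in the converse direction by instantiating at the quadruple $(a,e,e,a)$. Your write-up is if anything slightly more explicit in verifying existence in the forward direction.
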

\begin{proof}
 The square~\eqref{assoc} is a pullback of sets if and only if given $a,g,h,c\in M$ such that $ag=hc$, there exists a unique $b\in M$ such that $g=bc$ and $h=ab$.
 First, suppose that $G$ is a group. Then the only possible choice of $b$ is
 \[
  b = a^{-1}h = gc^{-1}
 \]
 which is unique by uniqueness of inverses.

 Conversely, suppose that \eqref{assoc} is a pullback. We can set $g,h=e$ and $c=a$ so that $ae=ea=a$.
 Instantiating the pullback property on these elements gives $b$ such that $ab=e$ and $ba=e$, that is, $b=a^{-1}$.
\end{proof}

\Cref{assoc_group} holds generally for a monoid object in a cartesian monoidal category, where the element-wise proof still applies thanks to the following standard observation.

\begin{myremark}
\label{yonedaremark}
	Given an object $M$ in a cartesian monoidal category $\mD$, there is a bijection between internal monoid structures on $M$ and monoid structures on every hom-set $\mD(X, M)$ such that pre-composition with any $f : X \to Y$ defines a monoid homomorphism
	\[
		\mD(Y, M) \longrightarrow \mD(X, M).
	\]
	The proof is straightforward by the Yoneda lemma.
	It follows that \Cref{assoc_group} holds for internal monoids in cartesian monoidal categories in general.
\end{myremark}

For the consideration of categorical probability, we now recall the simplest version of a commutative monad of measures.
It works with measures taking values in any semiring instead of $[0,\infty)$ (see e.g.~\cite[Section~5.1]{coumans2013scalars}),
but we restrict to the case of $[0,\infty)$ for simplicity.

\begin{mydefinition}\label{monadM}
 Let $X$ be a set. Denote by $MX$ the set of \emph{finitely supported measures on $X$}, i.e.~the functions $m:X\to[0,\infty)$
 that are zero for all but a finite number of $x\in X$.
 Given a function $f:X\to Y$, denote by $Mf:MX\to MY$ the function sending $m\in MX$ to the assignment
 \[
	 (Mf)(m) \: : \: y \longmapsto \sum_{x\in f^{-1}(y)} m(x) .
 \]
 This makes $M$ into a functor, and even a monad with the unit and multiplication maps
 \[
  \begin{tikzcd}[row sep=0]
   X \ar{r}{\delta} & MX \\
   x \ar[mapsto]{r} & \delta_x ,
  \end{tikzcd}
  \qquad\qquad
  \begin{tikzcd}[row sep=0]
   MMX \ar{r}{E} & MX \\
   \xi \ar[mapsto]{r} & E\xi ,
  \end{tikzcd}
 \]
 where
 \[
  \delta_x(x') = \begin{cases}
                  1 & x=x' , \\
                  0 & x\ne x',
                 \end{cases}
 \qquad\qquad
 (E\xi)(x) = \sum_{m\in MX} \xi(m)\,m(x) .
 \]
 Call $M$ the \emph{measure monad} on $\Set$.

 Denote also by $DX\subseteq MX$ the subset of \emph{probability measures}, i.e.~those finitely supported $p:X\to[0,\infty)$ such that
 \[
  \sum_{x\in X} p(x) = 1 .
 \]
 $D$ forms a sub-monad of $M$ called the \emph{distribution monad}.
\end{mydefinition}

It is known that $M$ is a commutative monad~\cite{coumans2013scalars}.
The corresponding lax monoidal structure
\[
	MX \times MY \stackrel{c}{\longrightarrow} M(X \times Y)
\]
is exactly the formation of product measures given by $c(m, m')(x, y) = m(x) m'(y)$.
Also $D$ is a commutative monad with the induced lax monoidal structure,
since the product of probability measures is again a probability measure.

\subsection{GS-monoidal and Markov categories}

We recall here the basic definitions adopting the graphical formalism of string diagrams,
referring to \cite{Selinger2011} for some background on various notions of monoidal categories and their associated diagrammatic calculus.

\begin{mydefinition}
A \textbf{gs-monoidal category} is a symmetric monoidal category $(\mC, \otimes, I)$
with a commutative comonoid structure on each object $X$ consisting of a comultiplication
and a counit
\ctikzfig{copy_del}
which satisfy the commutative comonoid equations
\ctikzfig{comonoid_equation}
These comonoid structures must be multiplicative with respect to the monoidal structure,
meaning that it satisfies the equations
\ctikzfig{comon-struct-mult}
\end{mydefinition}

\begin{mydefinition}
 A morphism $f:X\to Y$ in a gs-monoidal category is called \textbf{copyable} or \textbf{functional} if
 \ctikzfig{functional}
 It is called \textbf{discardable} or \textbf{full} if
 \ctikzfig{full}
\end{mydefinition}

\begin{myexample}
The category $\Rel$ of sets and relations with the monoidal operation $\otimes : \Rel \times \Rel\to \Rel$ given by the direct product of sets is a gs-monoidal category \cite{CorradiniGadducci02}. In this gs-monoidal category, the copyable arrows are precisely the partial functions, and the discardable arrows are the total relations.
\end{myexample}

\begin{myremark}\label{cartesiancase}
It is well-known that if every morphism is copyable and discardable, or equivalently if the copy and discard maps are natural, then the monoidal product is the categorical product, and thus the category is cartesian monoidal \cite{Fox:CACC}.
In other words, the following conditions are equivalent for a gs-monoidal category $\mC$
 \begin{itemize}
  \item $\mC$ is cartesian monoidal;
  \item every morphism is copyable and discardable;
  \item the copy and discard maps are natural.
 \end{itemize}
\end{myremark}

In recent work~\cite{fgtc2022lax} it has been shown that gs-monoidal categories naturally arise in several ways, such as Kleisli categories of commutative monads or span categories. In the following proposition, we recall the result regarding Kleisli categories.

\begin{myproposition}\label{monoidalgs}
 Let $T$ be a commutative monad on a cartesian monoidal category $\mD$.
 Then its Kleisli category $\mathrm{Kl}_T$ is canonically a gs-monoidal category with the copy and discard structure induced by that of $\mD$.
\end{myproposition}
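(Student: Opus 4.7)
The plan is to define the gs-monoidal structure on $\mathrm{Kl}_T$ by transporting the cartesian comonoid structure of $\mD$ along the canonical identity-on-objects functor $J : \mD \to \mathrm{Kl}_T$ which sends $f : X \to Y$ to $\eta_Y \circ f$, and then verify the four groups of axioms: symmetric monoidal structure, the comonoid equations, and multiplicativity of the comonoid with respect to $\otimes$.

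First I would recall that because $T$ is commutative, $\mathrm{Kl}_T$ carries a canonical symmetric monoidal structure with $X \otimes Y := X \times Y$ on objects and, on Kleisli morphisms $f : X \to TX'$ and $g : Y \to TY'$, with $f \otimes g := c_{X',Y'} \circ (f \times g)$, where $c_{X,Y} : TX \times TY \to T(X \times Y)$ is the double strength induced by commutativity. The monoidal unit is the terminal object $I = 1$ of $\mD$. In this setting $J$ becomes a strict symmetric monoidal functor, since on objects $J(X \times Y) = X \times Y = J(X) \otimes J(Y)$, and on morphisms $J(f \times g) = \eta_{X' \times Y'} \circ (f \times g) = c_{X',Y'} \circ (\eta_{X'} \times \eta_{Y'}) \circ (f \times g) = J(f) \otimes J(g)$, using that $c$ together with $\eta$ form the lax monoidal structure that makes $T$ a commutative monad.

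Next I would define the candidate copy and discard for an object $X$ of $\mathrm{Kl}_T$ as the $J$-images of the cartesian structure in $\mD$, namely $\mathrm{copy}_X := \eta_{X \times X} \circ \Delta_X$ and $\mathrm{del}_X := \eta_1 \circ {!}_X$. Since every object in a cartesian category carries a canonical commutative comonoid structure with respect to $\times$ (associativity, unitality, commutativity of the diagonal), and since strong symmetric monoidal functors preserve commutative comonoids, applying $J$ immediately yields the commutative comonoid equations in $\mathrm{Kl}_T$.

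The remaining axioms are the multiplicativity equations relating the comonoid structure on $X \otimes Y$ to those on $X$ and $Y$. By naturality of $\eta$ we have $\mathrm{copy}_{X \otimes Y} = \eta_{(X \times Y)^2} \circ \Delta_{X \times Y}$, and in $\mD$ the equality $\Delta_{X \times Y} = (\id \times \sigma \times \id) \circ (\Delta_X \times \Delta_Y)$ holds by cartesianness; applying $J$ and using that $J$ is strict monoidal (so it commutes with the symmetry $\sigma$ and with $\otimes$-factors) transports this equality to $\mathrm{Kl}_T$. Likewise $\mathrm{del}_{X \otimes Y} = \mathrm{del}_X \otimes \mathrm{del}_Y$ by applying $J$ to ${!}_{X \times Y} = {!}_X \times {!}_Y$.

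The main obstacle, and the only place where commutativity of $T$ is really doing work, is hidden in the claim that $J$ is strict symmetric monoidal with respect to the Kleisli $\otimes$ defined via $c_{X,Y}$: this reduces to checking that $c$ together with $\eta$ satisfy the standard compatibility identity $c_{X,Y} \circ (\eta_X \times \eta_Y) = \eta_{X \times Y}$, which is a direct consequence of one of the strength axioms, plus the comparable unit identities for $I = 1$. Once that is in hand, every verification reduces to a cartesian identity in $\mD$ and a single application of $J$, so no further diagrammatic work with the multiplication $\mu$ of $T$ is needed.
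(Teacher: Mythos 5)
Your proof is correct and follows the standard route: the paper itself gives no proof of this proposition, deferring to~\cite{fgtc2022lax}, and the argument there is exactly this transport of the cartesian comonoid structure along the strict symmetric monoidal identity-on-objects functor $J : \mD \to \mathrm{Kl}_T$, with the comonoid and multiplicativity equations inherited from $\mD$ by functoriality. One small correction to your closing remark: commutativity of $T$ is not what makes $J$ strict monoidal (the unit law $c_{X,Y}\circ(\eta_X\times\eta_Y)=\eta_{X\times Y}$ already holds for the double strength of any strong monad); it is needed one step earlier, in the recalled fact that $\otimes$ is a well-defined bifunctor on $\mathrm{Kl}_T$ satisfying interchange, which you have legitimately black-boxed as known background.
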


\begin{myexample}\label{kleisliM}
 The Kleisli categories of the monads $M$ and $D$ of \Cref{monadM} are gs-monoidal. We can write their Kleisli categories concretely as follows
 \begin{itemize}
  \item a morphism $k:X\to Y$ of $\mathrm{Kl}_M$ is a \emph{matrix} with rows indexed by $Y$ and columns indexed by $X$, and non-negative entries $k(y|x)$ such that for each $x\in X$, the number $k(y|x)$ is non-zero only for finitely many $y$;
  \item a morphism $k:X\to Y$ of $\mathrm{Kl}_D$ is a morphism of $\mathrm{Kl}_M$ such that moreover, for all $x\in X$, the sum of each column satisfies
  \[
   \sum_{y\in Y} k(y|x) = 1 
  \]
  If $X$ and $Y$ are finite, such a matrix is called a \emph{stochastic matrix}.
 \end{itemize}

 In both categories, identities are identity matrices, composition is matrix composition, monoidal structure is the cartesian product on objects and the Kronecker product on matrices, and the copy and discard maps are the images of the standard copy and discard maps on $\Set$ under the Kleisli inclusion functor.
\end{myexample}

\emph{Markov categories}~\cite{Fritz_2020} represent one of the more interesting specialisations of the notion of gs-monoidal category.
Based on the interpretation of their arrows
as generalised Markov kernels, they are considered the foundation of a categorical approach to probability theory.

\begin{mydefinition}
 A gs-monoidal category is said to be a \textbf{Markov category} if any (hence all) of the following equivalent conditions are satisfied
 \begin{itemize}
  \item the monoidal unit is terminal;
  \item the discard maps are natural;
  \item every morphism is discardable.
 \end{itemize}
 \end{mydefinition}

We recall from \cite{Kock71,Jacobs1994} the notion of \emph{affine monad}.

\begin{mydefinition}
 A monad $T$ on a cartesian monoidal category is called \textbf{affine} if $T1\cong 1$.
\end{mydefinition}

It was observed in~\cite[Corollary~3.2]{Fritz_2020} that if the monad preserves the terminal object, then every arrow of the Kleisli category is discardable, and this makes the Kleisli category into a Markov category.
Since the converse is easy to see, we have the following addendum to \Cref{monoidalgs}.

\begin{myproposition}\label{affinemarkov}
Let $T$ be a commutative monad on a cartesian monoidal category $\mD$. Then $\mathrm{Kl}_T$ is Markov if and only if $T$ is affine.
\end{myproposition}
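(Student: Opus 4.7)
The plan is to exploit the Kleisli adjunction to translate the Markov condition (monoidal unit is terminal) into a statement about $T1$ in $\mD$, and then compare with the definition of affineness.

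For the forward direction, I will assume that $\mathrm{Kl}_T$ is Markov, so the monoidal unit $1$ (which agrees with the unit of $\mD$ since the Kleisli inclusion is strict monoidal for commutative monads) is terminal in $\mathrm{Kl}_T$. Since the hom-sets satisfy $\mathrm{Kl}_T(X,1) \cong \mD(X,T1)$ naturally in $X$, terminality of $1$ in $\mathrm{Kl}_T$ means that $\mD(X,T1)$ is a singleton for every object $X$ of $\mD$. This is precisely the statement that $T1$ is terminal in $\mD$, hence $T1 \cong 1$, so $T$ is affine.

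For the backward direction, I would verify the third equivalent characterisation in the definition of Markov categories, namely that every morphism of $\mathrm{Kl}_T$ is discardable. Under the Kleisli inclusion, the discard map on $Y$ in $\mathrm{Kl}_T$ corresponds to $\eta_1 \circ {!}_Y : Y \to T1$ in $\mD$, where ${!}_Y$ is the unique $\mD$-morphism to $1$. For a Kleisli morphism $f : X \to Y$, represented by $\bar f : X \to TY$ in $\mD$, the composite with the discard in $\mathrm{Kl}_T$ corresponds to $T({!}_Y) \circ \bar f : X \to T1$. Assuming $T$ is affine, $T1 \cong 1$ is terminal in $\mD$, so any two parallel $\mD$-morphisms with codomain $T1$ coincide. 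In particular $T({!}_Y) \circ \bar f$ equals $\eta_1 \circ {!}_X$, which is exactly the discard on $X$ in $\mathrm{Kl}_T$. Hence $f$ is discardable.

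Neither direction presents a genuine obstacle; the one subtlety worth being careful about is that one should explicitly use that the Kleisli inclusion preserves the monoidal unit and that the discard map of $\mathrm{Kl}_T$ is induced by that of $\mD$ (as in \Cref{monoidalgs}), to justify the identifications of hom-sets and of discard maps used above.
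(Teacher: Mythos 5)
Your proof is correct and follows essentially the same route as the paper, which simply cites \cite[Corollary~3.2]{Fritz_2020} for the ``affine $\Rightarrow$ every Kleisli arrow is discardable'' direction and calls the converse easy; your write-up just makes both standard arguments explicit (terminality of $1$ in $\mathrm{Kl}_T$ translating to $\mD(X,T1)$ being a singleton, and the unit law $\mu_1 \circ T\eta_1 = \id$ reducing the Kleisli composite with the discard to $T({!}_Y)\circ \bar f$). No gaps.
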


\begin{myexample}
 The distribution monad $D$ of \Cref{monadM} is affine, and so its Kleisli category (\Cref{kleisliM}) is a Markov category. It is one of the simplest examples of categories of relevance for categorical probability.

The measure monad $M$ is not affine, as it is easy to see that $M1\cong[0,\infty)$, and so its Kleisli category is not Markov.
\end{myexample}

\section{Weakly Markov categories and weakly affine monads}
\label{secweakly}

In this section, we introduce an intermediate level between gs-monoidal and Markov called \emph{weakly Markov}, and its corresponding notion for monads, which we call \emph{weakly affine}.

\subsection{The monoid of effects}\label{monoids}

In a gs-monoidal category $\mC$ we call a \emph{state} a morphism from the monoidal unit $p:I\to X$, and \emph{effect} a morphism to the monoidal unit $a:X\to I$.
As is standard convention, we represent such morphisms as triangles as follows
 \ctikzfig{state-costate}

 Effects, i.e.~elements of the set $\mC(X,I)$, form canonically a commutative monoid as follows: the monoidal unit is the discard map $X\to I$, and given $a,b:X\to I$, their product $ab$ is given by copying\footnote{See also e.g.~the $\odot$ product in~\cite[Proposition~3.10]{coecke2011phasegroups}.}
\ctikzfig{ptwise-product}
If a morphism $f:X\to Y$ is copyable and discardable, the pre-composition with $f$ induces a morphism of monoids $\mC(Y,I)\to\mC(X,I)$.

\begin{myremark}
 The monoidal unit $I$ of a monoidal category is canonically a monoid object via the coherence isomorphisms $I\otimes I\cong I$ and $I\cong I$.
 However, in a general (i.e. not necessarily cartesian) gs-monoidal category $\mC$, the monoid structure on $\mC(X,I)$ is not, as in \Cref{yonedaremark}, coming from considering the presheaf represented by $I$. Indeed, in order for \Cref{yonedaremark} to hold, we would need that every pre-composition is a morphism of monoids. As remarked above, this fails in general unless all morphisms are copyable and discardable (i.e.~if $\mC$ is not cartesian monoidal).
\end{myremark}

Let us now consider the case where the gs-monoidal structure comes from a commutative monad on a cartesian monoidal category $\mD$.
In this case, the monoid structure on Kleisli morphisms $X\to 1$ does come from the canonical internal monoid structure on $T1$ (and from the one on $1$) in $\mD$.
Indeed, $T1$ is a monoid object with the following unit and multiplication~\cite[Section~10]{kock2012distributions}
 \[
 \begin{tikzcd}
  1 \ar{r}{\eta} & T1 ,
 \end{tikzcd}
 \qquad
 \begin{tikzcd}
  T1 \times T1 \ar{r}{c_{1,1}} & T(1\times 1) \ar{r}{\cong} & T1 .
 \end{tikzcd}
 \]
 For example, for the monad of measures $M$, we obtain $M1=[0,\infty)$ with its usual multiplication.
 The resulting monoid structure on Kleisli morphisms $X\to 1$ is now given as follows. The unit is given by
 \[
 \begin{tikzcd}
  X \ar{r}{\mathrm{del}_X} & 1 \ar{r}{\eta} & T1 ,
 \end{tikzcd}
 \]
 and the multiplication of Kleisli morphisms $f, g : X \to 1$ represented by $f^\sharp,g^\sharp:X\to T1$ is the Kleisli morphism represented by
 \[
 \begin{tikzcd}
  X \ar{r}{\mathrm{copy}_X} & X\times X \ar{r}{f^\sharp\times g^\sharp} &
  T1 \times T1 \ar{r}{c_{1,1}} & T(1\times 1) \ar{r}{\cong} & T1 .
 \end{tikzcd}
 \]
 For the monad of measures $M$, Kleisli morphisms $X\to 1$ are represented by functions $X\to [0,\infty)$, and this description shows that their product is the point-wise product.

For a general $\mC$, the commutative monoid $\mC(X,I)$ acts on the set $\mC(X,Y)$: given $a:X\to I$ and $f:X\to Y$, the resulting $a\cdot f$ is given as follows
\ctikzfig{action}
It is straightforward to see that this indeed amounts to an action of the monoid $\mC(X,I)$ on the set $\mC(X,Y)$.
For the monad of measures $M$, this action is given by point-wise rescaling.

Moreover, for a general $\mC$ the operation
\begin{align*}
	\mC(X,Y) \times \mC(X,Z) & \longrightarrow \mC(X,Y \otimes Z)		\\
	(f,g) & \longmapsto f\cdot g \coloneqq (f\otimes g)\circ\mathrm{copy}_X
\end{align*}
commutes with this action in each variable (separately).

\subsection{Main definitions}

\begin{mydefinition}\label{defweaklymarkov}
 A gs-monoidal category $\mC$ is called \textbf{weakly Markov} if for every object $X$, the monoid $\mC(X,I)$ is a group.
\end{mydefinition}

Clearly, every Markov category is weakly Markov: for every object $X$, the monoid $\mC(X,I)$ is the trivial group.

\begin{mydefinition}
 Given two parallel morphisms $f,g:X\to Y$ in a weakly Markov category $\mC$, we say that $f$ and $g$ are called \textbf{equivalent}, denoted $f\sim g$, if they lie in the same orbit for the action of $\mC(X,I)$, i.e.~if there is $a\in \mC(X,I)$ such that $a\cdot f=g$.
\end{mydefinition}

Note that if $a\cdot f=g$ for some $a$, then $a$ is unique. This can be seen by discarding $Y$ in the following diagram
\ctikzfig{free-action}
which shows that taking $a \coloneqq (\mathrm{del_Y} \, f)^{-1} \cdot (\mathrm{del}_Y \, g)$ is the only possibility.
In other words, the action of $\mC(X,I)$ on $\mC(X,Y)$ is free, i.e.~it has trivial stabilisers.

For the next statement, let us first call the \emph{mass} of a morphism $f:X\to Y$ in a gs-monoidal category $\mC$
the morphism $m_f\coloneqq \mathrm{del}_Y\circ f:X\to I$.
Note that $f$ is discardable if and only if $m_f=\mathrm{del}_X$, i.e.~if its mass is the unit of the monoid $\mC(X,I)$.

\begin{myproposition}\label{weaklydiscardable}
 Every morphism $f:X\to Y$ in a weakly Markov category is equivalent to a unique discardable morphism.
\end{myproposition}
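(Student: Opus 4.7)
The plan is to construct the discardable representative explicitly as $g \coloneqq m_f^{-1}\cdot f$, where $m_f = \mathrm{del}_Y\circ f\in\mC(X,I)$ is the mass of $f$ and $m_f^{-1}$ denotes its inverse in the group $\mC(X,I)$ (which exists by \Cref{defweaklymarkov}). The main task is then to verify three things: that $g$ is discardable, that $f\sim g$, and that any other discardable representative must coincide with $g$.

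First I would record the computational identity that, for any effect $a\in\mC(X,I)$ and any morphism $h:X\to Y$, the mass of $a\cdot h$ equals $a\cdot m_h$ in the monoid $\mC(X,I)$. This follows from unfolding the string-diagrammatic definition of the action, $a\cdot h = (a\otimes h)\circ\mathrm{copy}_X$, post-composing with $\mathrm{del}_Y$ and using naturality of $\mathrm{del}$ on the right tensor factor. Applied to our $g$, this gives
\[
  m_g \;=\; \mathrm{del}_Y\circ(m_f^{-1}\cdot f)\;=\; m_f^{-1}\cdot m_f \;=\; \mathrm{del}_X,
\]
so $g$ is discardable by the definition of mass. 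For $f\sim g$, since the action of $\mC(X,I)$ on $\mC(X,Y)$ is an action (as recalled in \Cref{monoids}), we have $m_f\cdot g = m_f\cdot(m_f^{-1}\cdot f) = (m_f\, m_f^{-1})\cdot f = f$, exhibiting the required equivalence.

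For uniqueness, suppose $h:X\to Y$ is any discardable morphism with $f\sim h$, say $f = a\cdot h$ for some $a\in\mC(X,I)$. Taking masses on both sides and using the identity above together with $m_h=\mathrm{del}_X$ (the unit of the monoid) yields $m_f = a\cdot\mathrm{del}_X = a$, so $a$ is forced to be $m_f$. The freeness of the action (established immediately after the definition of $\sim$) then gives $h = m_f^{-1}\cdot f = g$.

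The only subtle step is the mass formula $m_{a\cdot h} = a\cdot m_h$; everything else is then routine once one invokes the group property, the action axioms, and the previously established freeness. If desired, the argument can be summarised in a single string-diagrammatic chain similar to the one in the excerpt immediately following the definition of $\sim$, which already contains the key manipulation.
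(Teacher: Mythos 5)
Your proposal is correct and follows essentially the same route as the paper: construct the normalisation as $m_f^{-1}\cdot f$, and obtain uniqueness by taking masses and invoking freeness of the action, which is exactly the string-diagrammatic computation the paper performs. The only quibble is your justification of $m_{a\cdot h}=a\cdot m_h$ as ``naturality of $\mathrm{del}$'' --- that would amount to discardability of $h$, which need not hold; the step actually follows from the interchange law together with the multiplicativity of the discard maps, and with that relabelling the argument is complete.
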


We call the discardable morphism the \emph{normalisation} of $f$ and denote it by $n_f:X\to Y$.

\begin{proof}
 Consider the mass $m_f$, and denote its group inverse by $m_f^{-1}$. The morphism $n_f\coloneqq m_f^{-1}\cdot f$ is discardable and equivalent to $f$.
 Suppose now that $d:X\to Y$ is discardable and equivalent to $f$, i.e.~there exists $a:X\to I$ such that $d = a\cdot f$.
 Since $d$ is discardable
 \ctikzfig{normalization_proof}
 which means that $a=m_f^{-1}$, i.e.~$d=n_f$.
\end{proof}

In other words, every morphism $f$ can be written as its mass times its normalisation.

Let us now look at the Kleisli case.

\begin{mydefinition}\label{defweaklyaffine}
 A commutative monad $T$ on a cartesian monoidal category is called \textbf{weakly affine} if $T1$ with its canonical internal commutative monoid structure is a group.
\end{mydefinition}

This choice of terminology is motivated by the following proposition, which can be seen as a ``weakly'' version of \Cref{affinemarkov}.

\begin{myproposition}\label{weaklyboth}
 Let $\mD$ be a cartesian monoidal category and $T$ a commutative monad on $\mD$. Then the Kleisli category of $T$ is weakly Markov if and only if $T$ is weakly affine.
\end{myproposition}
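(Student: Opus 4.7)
The plan is to exploit the explicit description given in \Cref{monoids}: Kleisli morphisms $X \to I$ correspond under the Kleisli adjunction to morphisms $X \to T1$ in $\mD$, and under this bijection the monoid structure on $\mathrm{Kl}_T(X,I)$ from the gs-monoidal structure (copy, then tensor) coincides with the monoid structure on $\mD(X,T1)$ induced by the internal monoid structure on $T1$ (with unit $\eta: 1 \to T1$ and multiplication $c_{1,1}: T1 \times T1 \to T1$). This is precisely what the discussion preceding \Cref{defweaklymarkov} establishes, so I will simply cite it.

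For the forward direction (\emph{weakly affine} $\Rightarrow$ \emph{weakly Markov}), suppose $T1$ is an internal group in $\mD$. By the Yoneda-style \Cref{yonedaremark}, the internal monoid structure on $T1$ makes $\mD(X,T1)$ a monoid for each $X$, and pre-composition acts by monoid homomorphisms. Since $T1$ carries an internal inverse $i : T1 \to T1$, post-composition with $i$ provides an inverse in each $\mD(X,T1)$, so every such hom-monoid is a group. Via the identification above, $\mathrm{Kl}_T(X,I)$ is a group for every $X$, so $\mathrm{Kl}_T$ is weakly Markov.

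For the converse direction, suppose every hom-set $\mD(X,T1) \cong \mathrm{Kl}_T(X,I)$ is a group under the monoid structure induced by the internal monoid $(T1, \eta, c_{1,1} \circ (\cong))$. By \Cref{yonedaremark}, this monoid structure is natural in $X$, hence the inversion operation, if it exists pointwise, is automatically natural: for any $f: X \to Y$, the map $f^* : \mD(Y,T1) \to \mD(X,T1)$ is a monoid homomorphism between groups, hence preserves inverses. So the assignment $X \mapsto \mD(X,T1)$ factors through the category of abelian groups, and applying Yoneda to the inversion natural transformation $\mD(-, T1) \Rightarrow \mD(-, T1)$ yields a morphism $i : T1 \to T1$ that is an internal inverse for the internal monoid structure on $T1$. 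Concretely, one may take $i$ to be the inverse of $\id_{T1}$ in the group $\mD(T1,T1)$; the group axioms verified pointwise on $\mD(X,T1)$ for all $X$ translate, by Yoneda, into the commutativity of the internal group diagrams for $T1$. Thus $T1$ is an internal group, i.e.~$T$ is weakly affine.

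The main conceptual step is recognising that the two \emph{a priori} different monoid structures on $\mathrm{Kl}_T(X,I)$ coincide; given this, the rest is a direct application of the Yoneda principle of \Cref{yonedaremark} in both directions. No genuine obstacle arises, but some care is required to phrase the backward direction cleanly, since one must observe that the group inverses in the hom-sets are automatically natural before one can invoke Yoneda to extract the internal inversion $i: T1 \to T1$.
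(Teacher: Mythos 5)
Your proof is correct and follows essentially the same route as the paper's: both directions hinge on identifying the gs-monoidal monoid structure on $\mathrm{Kl}_T(X,I)$ with the one induced by the internal monoid $T1$ (which works because $\mD$ is cartesian, so $a^\sharp$ commutes with copying), constructing inverses by post-composition with the internal inversion $\iota$ in one direction, and extracting $\iota$ as the inverse of $\id_{T1}$ in $\mD(T1,T1)$ in the other. The paper itself remarks immediately after its proof that the argument can be phrased via the Yoneda embedding exactly as you do, so your careful observation that monoid homomorphisms between groups automatically preserve inverses is the same naturality point the paper relies on implicitly.
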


\begin{proof}
 First, suppose that $T1$ is an internal group, and denote by $\iota:T1\to T1$ its inversion map.
 The inverse of a Kleisli morphism $a : X \to 1$ in $\mathrm{Kl}_T(X,1)$ represented by $a^\sharp:X\to T1$ is represented by $\iota\circ a^\sharp$: indeed, the following diagram in $\mD$ commutes
 \[
  \begin{tikzcd}
  X \ar[bend right=8pc]{dd}[swap]{\mathrm{del}_X} \ar{d}[swap]{a^\sharp} \ar{r}{\mathrm{copy}_X} & X\times X \ar{d}[swap]{a^\sharp\times a^\sharp} \ar{dr}{a^\sharp\times(\iota\circ a^\sharp)} \\
  T1 \ar{d}[swap]{\mathrm{del}_{T1}} \ar{r}[swap]{\mathrm{copy}_{T1}} & T1\times T1 \ar{r}[swap]{\id\times\iota} & T1\times T1 \ar{r}{c_{1,1}} & T(1\times 1) \ar{d}{\cong} \\
  1 \ar{rrr}{\eta} &&& T1
  \end{tikzcd}
 \]
 where the bottom rectangle commutes since $\iota$ is the inversion map for $T1$. The analogous diagram with $\iota\times\id$ in place of $\id\times\iota$
 similarly commutes.

 Conversely, suppose that for every $X$, the monoid structure on $\mathrm{Kl}_T(X,1)$ has inverses. Then in particular we can take $X=T1$, and the inverse of the Kleisli morphism $\id:T1\to T1$ is an inversion map for $T1$.
\end{proof}

This result can also be thought of in terms of the Yoneda embedding, via \Cref{yonedaremark}: since the Yoneda embedding preserves and reflects pullbacks (and all limits), the associativity square for $T1$ is a pullback in $\mD$ if and only if the associativity squares of all the monoids $\mD(X,T1)$ are pullbacks.
Note that \Cref{yonedaremark} applies since we are assuming that $\mD$ is \emph{cartesian} monoidal. In the proof of \Cref{weaklyboth}, this is reflected by the fact in the main diagram, the morphism $a^\sharp$ commutes with the copy maps.

\subsection{Examples of weakly affine monads}
\label{secexamples}

Every affine monad is a weakly affine monad. Below you find a few less trivial examples.

\begin{myexample}
    \label{ex:nonzero_measures}
	Let $M^* : \Set \to \Set$ be the monad assigning to every set the set of finitely supported discrete \emph{non-zero}
	measures on $M^*$, or equivalently let $M^*(X)$ for any set $X$ be the set of non-zero finitely supported functions $X \to [0,\infty)$.
	It is a sub-monad $M^* \subseteq M$, meaning that the monad structure is defined in terms of the same formulas as for the monad of measures $M$ (\Cref{monadM}).
	Similarly, the lax structure components
	\[
		c_{X,Y} \: : \: M^* X \times M^* Y \longrightarrow M^*(X \times Y)
	\]
	are also given by the formation of product measures, or equivalently point-wise products of functions
	$X \to [0,\infty)$.

	Since $M^* 1 \cong (0,\infty)\ncong 1$, this monad is not affine. However the monoid structure of $(0,\infty)$ induced by $M^*$ is the usual multiplication of positive real numbers, which form a group. Therefore $M^*$ is weakly affine, and its Kleisli category is weakly Markov.

	On the other hand, if the zero measure is included, we have $M1 \cong [0,\infty)$ which is not a group under multiplication, so $M$ is not weakly affine.
\end{myexample}

\begin{myexample}
	\label{ex:abelian_group}
	Let $A$ be a commutative monoid.
	Then the functor $T_A \coloneqq A \times -$ on $\Set$ has a canonical structure of commutative monad,
	where the lax structure components $c_{X,Y}$ are given by multiplying elements in $A$ while carrying the elements
	of $X$ and $Y$ along.

	Since $T_A 1 \cong A$, the monad $T_A$ is weakly affine if and only if $A$ is a group, and affine if and only if $A\cong 1$.
\end{myexample}

\begin{myexample}
    \label{ex:nonexample}
    As for negative examples,
    consider the free abelian group monad $F$ on $\Set$. Its functor takes a set $X$ and forms the set $FX$ of finite multisets (with repetition, where order does not matter) of elements of $X$ and their formal inverses.
    We have that $F1\cong \mathbb{Z}$, which is an abelian group under addition.
    However, the monoid structure on $F1$ induced by the monoidal structure of the monad corresponds to the \emph{multiplication} on $\mathbb{Z}$, which does not have inverses. Therefore $F$ is not weakly affine.
\end{myexample}

\section{Conditional independence in weakly Markov categories}
\label{secindep}

Markov categories have a rich theory of conditional independence in the sense of probability theory~\cite{fritz2022dseparation}.
It is noteworthy that some of those ideas can be translated and generalised to the setting of weakly Markov categories.

\begin{mydefinition}\label{defcondind}
 A morphism $f:A\to X_1\otimes\dots\otimes X_n$ in a gs-monoidal category $\mC$ is said to exhibit \textbf{conditional independence of the $X_i$ given $A$} if and only if it can be expressed as a product of the following form
 \ctikzfig{cond-ind-sep}
\end{mydefinition}

Note that this formulation is a bit different from the earlier definitions given in \cite[Definition~6.6]{cho_jacobs_2019} and~\cite[Definition~12.12]{Fritz_2020}, which were formulated for morphisms in Markov categories and state that $f$ exhibits conditional independence if the above holds with the $g_i$ being the \emph{marginals} of $f$, which are
\ctikzfig{marginal}
Indeed, in a Markov category, conditional independence in our sense holds if and only if it holds with $g_i = f_i$~\cite[Lemma~12.11]{Fritz_2020}.
We also say that $f$ is the \emph{product of its marginals}.

\begin{myexample}
 In the Kleisli category of the distribution monad $D$, which is Markov, a morphism $f:A\to X\otimes Y$ exhibits conditional independence if and only if its value at every $a \in A$ is the product of its marginals~\cite[Section~12]{Fritz_2020}.
\end{myexample}

Here is what conditional independence looks like in the Kleisli case.

\begin{myproposition}\label{indepkleisli}
 Let $\mD$ be a cartesian monoidal category and $T$ a commutative monad on $\mD$.
 Then a Kleisli morphism represented by $f^\sharp:A\to T(X_1\times\dots\times X_n)$ exhibits conditional independence of the $X_i$ given $A$ if and only if it factors as
 \[
 \begin{tikzcd}
  A \ar{d}[swap]{(g_1^\sharp,\dots,g_n^\sharp)} \ar{dr}{f^\sharp} \\
  TX_1\times\dots\times TX_n \ar{r}[swap]{c} & T(X_1\times\dots\times X_n)
 \end{tikzcd}
 \]
 for some Kleisli maps $g_i^\sharp:A\to TX_i$,
 where the map $c$ above is the one obtained by iterating the lax monoidal structure (which is unique by associativity).
\end{myproposition}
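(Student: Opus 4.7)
The plan is to simply unfold what conditional independence (\Cref{defcondind}) means in the Kleisli category, using the description of the induced gs-monoidal structure from \Cref{monoidalgs}, and to check that it agrees with the claimed factorisation through the iterated lax monoidal structure~$c$. Concretely, I would first recall that in $\mathrm{Kl}_T$ the copy map on $A$ is represented in $\mD$ by $\eta_{A\times A}\circ\mathrm{copy}_A$, and more generally the iterated copy $A\to A^{\otimes n}$ is represented by $\eta_{A^n}\circ\Delta^{(n)}$, where $\Delta^{(n)}$ is the iterated diagonal in $\mD$. Similarly, for Kleisli morphisms $g_i\colon A\to X_i$ represented by $g_i^\sharp\colon A\to TX_i$, the monoidal product $g_1\otimes\cdots\otimes g_n\colon A^{\otimes n}\to X_1\otimes\cdots\otimes X_n$ in $\mathrm{Kl}_T$ is represented by
\[
    A^n \xrightarrow{\;g_1^\sharp\times\cdots\times g_n^\sharp\;} TX_1\times\cdots\times TX_n \xrightarrow{\;c\;} T(X_1\times\cdots\times X_n),
\]
with $c$ denoting the unique iterated lax monoidal structure map (associativity makes the iteration well-defined).

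Next I would compose these two Kleisli morphisms to describe the right-hand side of the conditional independence equation at the Kleisli level. By the usual formula for Kleisli composition, the composite is represented in $\mD$ by
\[
    A \xrightarrow{\Delta^{(n)}} A^n \xrightarrow{\eta_{A^n}} T(A^n) \xrightarrow{T(g_1^\sharp\times\cdots\times g_n^\sharp)} T(TX_1\times\cdots\times TX_n) \xrightarrow{Tc} TT(X_1\times\cdots\times X_n) \xrightarrow{\mu} T(X_1\times\cdots\times X_n).
\]
Two applications of the naturality of $\eta$, once along $g_1^\sharp\times\cdots\times g_n^\sharp$ and once along $c$, allow us to push the $\eta$ all the way to the right, after which the monad unit axiom $\mu\circ\eta_{T(-)}=\id$ collapses the tail. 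What remains is exactly the claimed factorisation
\[
    f^\sharp \;=\; c\circ (g_1^\sharp\times\cdots\times g_n^\sharp)\circ \Delta^{(n)},
\]
i.e.\ $f^\sharp$ is obtained by pairing the $g_i^\sharp$ using the cartesian structure of $\mD$ and then applying $c$.

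This establishes both directions at once: a conditional-independence factorisation of $f$ in $\mathrm{Kl}_T$ in the sense of \Cref{defcondind} is \emph{by definition} a choice of Kleisli morphisms $g_i\colon A\to X_i$ such that $f=(g_1\otimes\cdots\otimes g_n)\circ\mathrm{copy}^{(n)}_A$, and the computation above shows that this is equivalent to requiring $f^\sharp$ to factor as in the statement via the corresponding $g_i^\sharp\colon A\to TX_i$. There is no real obstacle; the only thing that requires mild care is keeping track of the iterated versions of copy and $c$ and invoking associativity of the lax structure to justify the uniqueness of $c\colon TX_1\times\cdots\times TX_n\to T(X_1\times\cdots\times X_n)$. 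Everything else is a routine application of naturality of $\eta$ and the unit law for $T$.
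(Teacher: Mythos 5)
Your proof is correct and follows essentially the same route as the paper's: the paper simply asserts that unfolding \Cref{defcondind} in $\mathrm{Kl}_T$ yields the composite $c\circ(g_1^\sharp\times\cdots\times g_n^\sharp)\circ\mathrm{copy}_A$, which is exactly the computation you carry out explicitly via naturality of $\eta$ and the unit law. Your version just makes the routine Kleisli-composition bookkeeping visible, which the paper leaves implicit.
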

\begin{proof}
 In terms of the base category $\mD$, a Kleisli morphism in the form of \Cref{defcondind} reads as follows
 \[
  \begin{tikzcd}[sep=large]
   A \ar{r}{\mathrm{copy}} & A\times\dots\times A \ar{r}{g_1^\sharp\times\dots\times g_n^\sharp} & TX_1\times\dots\times TX_n \ar{r}{c} & T(X_1\times\dots\times X_n) .
  \end{tikzcd}
 \]
 Therefore $f^\sharp:A\to T(X_1\times\dots\times X_n)$ exhibits the conditional independence if and only if it is of the form above.
\end{proof}

\begin{myexample}\label{zeromeasure}
	In the Kleisli category of the measure monad $M$, and for any object, the morphism $A \to X_1 \otimes \cdots \otimes X_n$ given by the zero measure on every $a \in A$ exhibits conditional independence of its outputs given its input. For example, for $A=1$, the zero measure on $X\times Y$ is the product of the zero measure on $X$ and the zero (or any other) measure on $Y$.
 Notice that both marginals of the zero measure are zero measures -- therefore, the factors appearing in the product are not necessarily related to the marginals.
\end{myexample}

In a weakly Markov category, the situation is similar to the Markov case discussed above,
but up to equivalence: an arrow exhibits conditional independence if and only if it is \emph{equivalent to} the product of its marginals.

\begin{myproposition}\label{eqcondind}
 Let $f:A\to X_1\otimes\dots\otimes X_n$ be a morphism in a weakly Markov category $\mC$. Then $f$ exhibits conditional independence of the $X_i$ given $A$ if and only if it is equivalent to the product of all its marginals.
\end{myproposition}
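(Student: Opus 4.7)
The plan is to exploit two structural facts from the end of \Cref{monoids}: the group $\mC(A,I)$ acts freely on each hom-set $\mC(A,Y)$, and the $n$-fold product $h_1 \cdots h_n \coloneqq (h_1 \otimes \cdots \otimes h_n) \circ \mathrm{copy}_A^{(n)}$ commutes with that action in each variable separately. Throughout I write $m_g \coloneqq \mathrm{del}_Y \circ g$ for the mass of a morphism $g \colon A \to Y$, and note that the product of masses in $\mC(A,I)$ agrees with the product defined by copying.

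For the forward implication, I start from a conditional independence factorisation $f = g_1 \cdot g_2 \cdots g_n$ and compute each marginal by discarding all outputs but the $i$-th. This collapses each $g_j$ with $j \neq i$ into its mass $m_{g_j}$, and reorganising the resulting copy tree via coassociativity and cocommutativity identifies the marginal with the action expression
\[
  f_i \;=\; \Bigl(\prod_{j \neq i} m_{g_j}\Bigr) \cdot g_i.
\]
Setting $M \coloneqq m_{g_1} \cdots m_{g_n}$, which is itself the mass of $f$ and hence lies in the group $\mC(A,I)$, a short induction on $n$ that repeatedly uses the commutation of the action with the product then yields $f_1 \cdot f_2 \cdots f_n = M^{n-1} \cdot f$. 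Since $M^{n-1}$ is invertible, $f$ is equivalent to the product of its marginals.

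For the converse, suppose $a \cdot f = f_1 \cdot f_2 \cdots f_n$ for some $a \in \mC(A,I)$, which is now a group, so $f = a^{-1} \cdot (f_1 \cdots f_n)$. The same commutation property lets me absorb $a^{-1}$ into the first tensor factor, giving
\[
  f \;=\; \bigl((a^{-1} \cdot f_1) \otimes f_2 \otimes \cdots \otimes f_n\bigr) \circ \mathrm{copy}_A^{(n)},
\]
so $f$ decomposes as in \Cref{defcondind} with $g_1 \coloneqq a^{-1} \cdot f_1$ and $g_i \coloneqq f_i$ for $i \geq 2$.

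The main point requiring care is the marginal computation in the forward direction: rewriting the string diagram obtained by discarding the factors $g_j$ with $j \neq i$ as the clean monoid action $(\prod_{j \neq i} m_{g_j}) \cdot g_i$ relies on reshuffling copy nodes through the commutative comonoid axioms and the commutativity of $\mC(A,I)$. Once that identification is in place, everything else is routine arithmetic inside a commutative group acting on the hom-set, and checking the case $n=2$ already illustrates the full pattern.
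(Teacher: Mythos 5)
Your proof is correct and follows essentially the same route as the paper's: marginalise the factorisation to see that each marginal $f_i$ is the product of the other factors' masses acting on $g_i$ (hence $f_i \sim g_i$), then use compatibility of the $n$-fold product with the group action to pass between $f$ and the product of marginals, and in the converse absorb the witnessing scalar into one tensor factor (you put $a^{-1}$ on the first factor where the paper puts $a$ on the last, an immaterial difference). Your explicit bookkeeping $f_1 \cdots f_n = M^{n-1}\cdot f$ with $M = m_f$ is a slightly more detailed rendering of the step the paper leaves implicit, and it is accurate.
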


\begin{proof}
 Denote the marginals of $f$ by $f_1,\dots,f_n$.
 Suppose that $f$ is a product as in \Cref{defcondind}. By marginalising, for each $i=1,\dots,n$ we get
  \ctikzfig{cond-ind-proof}
 Therefore for each $i$ we have that $f_i\sim g_i$.

 Conversely, suppose that $f$ is equivalent to the product of its marginals, i.e.~that there exists $a:X\to I$ such that $f$ is equal to the following
 \ctikzfig{cond-ind-proof2}
 One can then choose $g_i=f_i$ for all $i<n$, and $g_n = a\cdot f_n$, so that $f$ is in the form of \Cref{defcondind}.
\end{proof}

\begin{myremark}
 For $n=2$, a morphism $f:A\to X\otimes Y$ in a weakly Markov category $\mC$ exhibits conditional independence of $X$ and $Y$ given $A$
 if and only if the equation below holds
 \ctikzfig{cond-ind2}
 Indeed this arises as a consequence of \cref{eqcondind} by noting that both sides of the equation describe the same element of $\mC(A,I)$ upon marginalising.
\end{myremark}

\subsection{Main result}

The concept of conditional independence for weakly Markov categories allows us to give an equivalent characterisation of weakly affine monads.
The condition is a pullback condition on the associativity diagram, and it recovers \Cref{assoc_group} when applied to the monads of the form $A \times -$ for $A$ a commutative monoid.

\begin{mytheorem}\label{mainthm}
 Let $\mD$ be a cartesian monoidal category and $T$ a commutative monad on $\mD$.
 Then the following conditions are equivalent
 \begin{enumerate}
  \item\label{condgroup} $T$ is weakly affine;
  \item\label{condwm} the Kleisli category $\mathrm{Kl}_T$ is weakly Markov;
  \item\label{condpullback} for all objects $X$, $Y$, and $Z$, the following associativity diagram is a pullback
	\begin{equation}
		\label{assoc_pullback}
		\hspace{-9pt}	
		\begin{tikzcd}[column sep=3.3pc]
			T(X) \times T(Y) \times T(Z) \ar{r}{\id \times c_{Y,Z}} \ar[swap]{d}{c_{X,Y} \times \id}	& T(X) \times T(Y \times Z) \ar{d}{c_{X,Y \times Z}}	\\
			T(X \times Y) \times T(Z) \ar{r}{c_{X\times Y,Z}}						& T(X \times Y \times Z)
		\end{tikzcd}
	\end{equation}
 \end{enumerate}
\end{mytheorem}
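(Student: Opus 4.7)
The plan is to take the equivalence \eqref{condgroup}$\iff$\eqref{condwm} for free from \Cref{weaklyboth}, and then establish the substantive equivalence \eqref{condgroup}$\iff$\eqref{condpullback}. By \Cref{yonedaremark}, the pullback property in $\mD$ is equivalent to the corresponding square being a pullback of sets after applying $\mD(W,-)$ for every object $W$. Unpacking this via the identification $\mD(W, TX) \cong \mathrm{Kl}_T(W, X)$ and the formula $(f\cdot g)^\sharp = c_{X,Y}\circ(f^\sharp, g^\sharp)$, condition \eqref{condpullback} becomes: for every $W$ and every tuple of Kleisli morphisms $f_1\colon W\to X$, $g\colon W\to Y\otimes Z$, $h\colon W\to X\otimes Y$, $f_3\colon W\to Z$ with $f_1\cdot g = h\cdot f_3$, there exists a unique Kleisli morphism $f_2\colon W\to Y$ such that $f_1\cdot f_2 = h$ and $f_2\cdot f_3 = g$.

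For \eqref{condpullback}$\Rightarrow$\eqref{condgroup}, specialise the pullback to $X=Y=Z=1$. Up to the coherence isomorphisms $1\times 1\cong 1$, the diagram \eqref{assoc_pullback} collapses to the associativity square of the internal monoid $T1$. Applying \Cref{yonedaremark} together with \Cref{assoc_group}, the resulting pullback property in $\mD$ says exactly that $T1$ is a group object, i.e.\ that $T$ is weakly affine.

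For the harder direction \eqref{condgroup}$\Rightarrow$\eqref{condpullback}, work inside the weakly Markov category $\mathrm{Kl}_T$, using \Cref{weaklyboth}. Uniqueness of $f_2$ is immediate: discarding $X$ in the equation $f_1\cdot f_2=h$ yields $m_{f_1}\cdot f_2 = h_Y$ (the $Y$-marginal of $h$), and since $m_{f_1}\in\mathrm{Kl}_T(W,I)$ is invertible, $f_2 = m_{f_1}^{-1}\cdot h_Y$ is forced. For existence, define $f_2$ by this formula; note that marginalising $f_1\cdot g = h\cdot f_3$ down to $Y$ gives $m_{f_1}\cdot g_Y = m_{f_3}\cdot h_Y$, so equivalently $f_2 = m_{f_3}^{-1}\cdot g_Y$. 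To verify $f_1\cdot f_2 = h$, marginalise the given equation over $Z$ to obtain $f_1\cdot g_Y = m_{f_3}\cdot h$, and compute
\[
  f_1\cdot f_2 \;=\; f_1\cdot(m_{f_3}^{-1}\cdot g_Y) \;=\; m_{f_3}^{-1}\cdot(f_1\cdot g_Y) \;=\; m_{f_3}^{-1}\cdot m_{f_3}\cdot h \;=\; h,
\]
using that the action of effects commutes with the gs-monoidal product in each variable (\Cref{monoids}). A symmetric computation marginalising over $X$ gives $f_2\cdot f_3 = g$.

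The main delicate point is the correct translation of the pullback condition into the Kleisli language of weakly Markov categories; once this dictionary is in place, the construction of $f_2$ via an inverse mass times a marginal is a clean categorical analogue of the element-wise argument in \Cref{assoc_group}, confirming that \Cref{mainthm} is a genuine generalisation of that classical characterisation of groups among monoids.
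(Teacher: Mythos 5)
Your proof is correct, and your handling of $(1)\Leftrightarrow(2)$ (via \Cref{weaklyboth}) and of $(3)\Rightarrow(1)$ (specialising to $X=Y=Z=1$ and reducing to \Cref{assoc_group} through \Cref{yonedaremark}) matches the paper. For the main direction $(1)\Rightarrow(3)$ you take a genuinely different and more economical route. The paper reads a cone over the cospan, via \Cref{indepkleisli}, as a Kleisli morphism exhibiting conditional independence of $X$ and $Y\otimes Z$ as well as of $X\otimes Y$ and $Z$; it then invokes the localised independence property (\Cref{local}, which in turn rests on \Cref{eqcondind}) to upgrade this to three-way independence, obtains two a priori unrelated factorisations through $TX\times TY\times TZ$, and finally reconciles them into a single mediating map using the group of effects. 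You bypass the conditional-independence machinery entirely: the product structure of the corner $TX\times TY\times TZ$ forces the first and third components of any mediating map to be $f_1$ and $f_3$, the middle component is pinned down as $f_2=m_{f_1}^{-1}\cdot h_Y=m_{f_3}^{-1}\cdot g_Y$ by the freeness of the action of the group $\mathrm{Kl}_T(W,I)$, and the two required equations follow by marginalising the hypothesis $f_1\cdot g=h\cdot f_3$ over $Z$ and over $X$ respectively, using that the pairing operation commutes with the action in each variable. This is a faithful categorical transcription of the element-wise proof of \Cref{assoc_group} (where $b=a^{-1}h=gc^{-1}$), it is shorter, and it makes uniqueness transparent. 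What the paper's route buys instead is the explicit link between the pullback condition and conditional independence: \Cref{local} is of independent interest and is reused in the paper's subsequent remark contrasting the localised independence property with the pullback property. As a proof of the theorem, however, your argument is complete and arguably cleaner.
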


In order to prove the theorem above, we will exploit the following property of weakly Markov categories.

\begin{mylemma}[localised independence property]\label{local}
 Let $\mC$ be a weakly Markov category. Whenever a morphism  $f:A\to X\otimes Y\otimes Z$ exhibits conditional independence of $X\otimes Y$ (jointly) and $Z$ given $A$, as well as conditional independence of $X$ and $Y\otimes Z$ given $A$, then it exhibits conditional independence of $X$, $Y$, and $Z$ given $A$.
\end{mylemma}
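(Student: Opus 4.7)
The plan is to combine the two given factorisations by marginalising the three-variable target to a two-variable one in two different ways, and then using the group structure on effects to reconcile the two expressions into a joint three-fold factorisation.

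First I would unwrap the hypotheses using \Cref{defcondind}: write $f = (g_{XY} \otimes g_Z) \circ \mathrm{copy}_A = (h_X \otimes h_{YZ}) \circ \mathrm{copy}_A$ for suitable morphisms $g_{XY}$, $g_Z$, $h_X$, $h_{YZ}$. I would then compute the marginal $f_{XY} := (\id_X \otimes \id_Y \otimes \mathrm{del}_Z) \circ f$ from both expressions. The first factorisation gives $f_{XY} = m_{g_Z} \cdot g_{XY}$, where $m_{g_Z} := \mathrm{del}_Z \circ g_Z$ is the mass of $g_Z$, while the second gives a product form $f_{XY} = (h_X \otimes h_Y) \circ \mathrm{copy}_A$ with $h_Y := (\id_Y \otimes \mathrm{del}_Z) \circ h_{YZ}$.

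The crucial step is then to exploit weak Markovness. Since $m_{g_Z}$ lies in the group $\mC(A,I)$, I can multiply the equation $m_{g_Z} \cdot g_{XY} = (h_X \otimes h_Y) \circ \mathrm{copy}_A$ on the left by $m_{g_Z}^{-1}$ and, using coassociativity of copy, absorb this scalar into one of the two tensor factors to obtain $g_{XY} = \bigl((m_{g_Z}^{-1} \cdot h_X) \otimes h_Y\bigr) \circ \mathrm{copy}_A$. Substituting this back into the first original factorisation and once more invoking coassociativity of copy yields a three-fold product $f = (k_X \otimes k_Y \otimes k_Z) \circ \mathrm{copy}_A^{(3)}$ with $k_X := m_{g_Z}^{-1} \cdot h_X$, $k_Y := h_Y$ and $k_Z := g_Z$, exhibiting precisely the conditional independence required by \Cref{defcondind}.

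The main subtlety is the scalar bookkeeping in the absorption step: one must verify that multiplication by an effect $\alpha \cdot (-)$ can be pulled inside a tensor-and-copy expression $(f_1 \otimes f_2) \circ \mathrm{copy}_A$ by placing $\alpha$ next to \emph{either} factor, a fact which follows cleanly from the coassociativity of copy but is easy to bungle with string diagrams. Conceptually this is the only place where the group hypothesis enters, confirming that weak Markovness is precisely the ingredient that turns the $(XY,Z)$- and $(X,YZ)$-splittings into a full $(X,Y,Z)$-splitting.
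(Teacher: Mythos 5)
Your proof is correct. The one step that deserves scrutiny is the absorption of the scalar $m_{g_Z}^{-1}$ into a single tensor factor of $(h_X\otimes h_Y)\circ\mathrm{copy}_A$; this is exactly the compatibility of the $\mC(A,I)$-action with the product $(f,g)\mapsto (f\otimes g)\circ\mathrm{copy}_A$ in each variable separately, which the paper records at the end of Section~3.1, so you are on safe ground. Your route differs from the paper's in that you never invoke \Cref{eqcondind}: the paper marginalises out $X$ to see that $f_{YZ}$ is a product of $Y$- and $Z$-parts, then chains the equivalences $f\sim f_X\cdot f_{YZ}$ and $f_{YZ}\sim f_Y\cdot f_Z$ via \Cref{eqcondind} to conclude that $f$ is equivalent to the product of all three marginals, and converts back. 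You instead marginalise out $Z$ from both hypothesised factorisations, equate the two resulting expressions for $f_{XY}$, and use the group inverse to solve explicitly for $g_{XY}$ as a two-fold product before substituting back. The paper's argument is shorter because it reuses an already-proved characterisation; yours is self-contained and makes visible exactly where the group hypothesis is needed (inverting the mass $m_{g_Z}$), essentially re-deriving the relevant direction of \Cref{eqcondind} inline. Both are valid; the bookkeeping you flag (counitality giving $e\cdot g_{XY}=g_{XY}$, coassociativity collapsing the nested copies into $\mathrm{copy}_A^{(3)}$) all checks out.
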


\begin{proof}[Proof of \Cref{local}]
 Let us then assume that $f:A\to X\otimes Y\otimes Z$ exhibits conditional independence of $X\otimes Y$ (jointly) and $Z$ given $A$, as well as conditional independence of $X$ and $Y\otimes Z$ given $A$.
 By marginalising out $X$, we have that $f_{YZ}$ exhibits conditional independence of $Y$ and $Z$ given $A$.
 Since by hypothesis $f$ exhibits conditional independence of $X$ and $Y\otimes Z$ given $A$, by \Cref{eqcondind} it follows that $f$ is equivalent to the product  of $f_X$ and $f_{YZ}$. But, again by \Cref{eqcondind}, $f_{YZ}$ is equivalent to the product of $f_Y$ and $f_Z$, so it follows that $f$ is equivalent to the product of all its marginals. Using \Cref{eqcondind} in the other direction, this means that $f$ exhibits conditional independence of $X$, $Y$, and $Z$ given $A$.
\end{proof}

We are now ready to prove the theorem.

\begin{proof}[Proof of \Cref{mainthm}]
We already know that
 $\ref{condgroup}\Leftrightarrow\ref{condwm}$:
 see \Cref{weaklyboth}.
 We then focus on the correspondence between the first and third item.

 $\ref{condgroup}\Rightarrow\ref{condpullback}$:
 By the universal property of products, a cone over the cospan in \eqref{assoc_pullback} consists of maps $g_1^\sharp:A\to TX$, $g_{23}^\sharp:A\to T(Y\times Z)$, $g_{12}^\sharp:A\to T(X\times Y)$ and $g_3^\sharp:A\to TZ$ such that the following diagram commutes
 \[
  \begin{tikzcd}[column sep=3.3pc]
   A \ar[bend left=10]{drr}{(g_1^\sharp,g_{23}^\sharp)} \ar[bend right=15]{ddr}[swap]{(g_{12}^\sharp,g_3^\sharp)} \\
   & T(X) \times T(Y) \times T(Z) \ar{r}[swap]{\id \times c_{Y,Z}} \ar{d}{c_{X,Y} \times \id}	& T(X) \times T(Y \times Z) \ar{d}{c_{X,Y \times Z}}	\\
   & T(X \times Y) \times T(Z) \ar{r}[swap]{c_{X\times Y,Z}}						& T(X \times Y \times Z)
  \end{tikzcd}
 \]

\noindent
By \Cref{indepkleisli}, this amounts to a Kleisli morphism $f^\sharp:A\to T(X\times Y\times Z)$ exhibiting conditional independence of $X$ and $Y\otimes Z$ given $A$, as well as of $X\otimes Y$ and $Z$ given $A$. By the localised independence property (\Cref{local}), we then have that $f$ exhibits conditional independence of all $X$, $Y$ and $Z$ given $A$, and so, again by \Cref{indepkleisli}, $f^\sharp$ factors through the product $TX\times TY\times TZ$.
 More specifically, by marginalising over $Z$, we have that $g_{12}^\sharp$ factors through $TX\times TY$, i.e.~the following diagram on the left commutes for some $h_1^\sharp:A\to TX$ and $h_2^\sharp:A\to TY$, and similarly, by marginalising over $X$, the diagram on the right commutes for some $\ell_2^\sharp:A\to TY$ and $\ell_3^\sharp:A\to TZ$
 \[
  \begin{tikzcd}
   A \ar{d}[swap]{(h_1^\sharp,h_2^\sharp)} \ar{dr}{g_{12}^\sharp} \\
   TX\times TY \ar{r}[swap]{c_{X,Y}} & T(X\times Y)
  \end{tikzcd}
  \qquad
  \begin{tikzcd}
   A \ar{d}[swap]{(\ell_2^\sharp,\ell_3^\sharp)} \ar{dr}{g_{23}^\sharp} \\
   TY\times TZ \ar{r}[swap]{c_{Y,Z}} & T(Y\times Z)
  \end{tikzcd}
 \]

 \noindent
 In other words, we have that the upper and the left curved triangles in the following diagram commute
\[
  \begin{tikzcd}[column sep=3.3pc]
   A \ar[bend left=15, shift left]{drr}{(g_1^\sharp,g_{23}^\sharp)} \ar[bend right=35]{ddr}[swap]{(g_{12}^\sharp,g_3^\sharp)}
    \ar[shift left]{dr}[near end]{(g_1^\sharp,\ell_2^\sharp,\ell_3^\sharp)} \ar[shift right]{dr}[swap, pos=0.7]{(h_1^\sharp,h_2^\sharp, g_3^\sharp)} \\
   & T(X) \times T(Y) \times T(Z) \ar{r}[swap]{\id \times c_{Y,Z}} \ar{d}{c_{X,Y} \times \id}	& T(X) \times T(Y \times Z) \ar{d}{c_{X,Y \times Z}}	\\
   & T(X \times Y) \times T(Z) \ar{r}[swap]{c_{X\times Y,Z}}						& T(X \times Y \times Z)
  \end{tikzcd}
 \]

 \noindent
 By marginalising over $Y$ and $Z$, and by weak affinity of $T$, there exists a unique $a^\sharp:A\to T1$ such that $h_1 = a\cdot g_1$.
 Therefore
 \[
  g_{12} = h_1\cdot h_2 = (a\cdot g_1) \cdot h_2 = g_1\cdot (a\cdot h_2) ,
 \]

 \noindent
 and so in the diagram above we can equivalently replace $h_1$ and $h_2$ with $g_1$ and $a\cdot h_2$.
 Similarly, by marginalising over $X$ and $Y$, there exists a unique $c^\sharp:A\to T1$ such that $\ell_3=c\cdot g_3$, so that
 \[
  g_{23}= \ell_2\cdot\ell_3 = \ell_2\cdot (c\cdot g_3) = (c\cdot \ell_2) \cdot g_3
 \]

 \noindent
 and in the diagram above we can replace $\ell_2$ and $\ell_3$ with $c\cdot \ell_2$ and $g_3$, as follows
 \[
  \begin{tikzcd}[column sep=3.3pc]
   A \ar[bend left=15, shift left]{drrr}{(g_1^\sharp,g_{23}^\sharp)} \ar[bend right=35]{ddrr}[swap]{(g_{12}^\sharp,g_3^\sharp)}
    \ar[shift left]{drr}[near end]{(g_1^\sharp,(c\cdot\ell_2)^\sharp,g_3^\sharp)} \ar[shift right]{drr}[swap, pos=0.8]{(g_1^\sharp,(a\cdot h_2)^\sharp, g_3^\sharp)} \\
   && T(X) \times T(Y) \times T(Z) \ar{r}[swap]{\id \times c_{Y,Z}} \ar{d}{c_{X,Y} \times \id}	& T(X) \times T(Y \times Z) \ar{d}{c_{X,Y \times Z}}	\\
   && T(X \times Y) \times T(Z) \ar{r}[swap]{c_{X\times Y,Z}}						& T(X \times Y \times Z)
  \end{tikzcd}
 \]

 \noindent
 Now, marginalising over $X$ and $Z$, we see that necessarily $a\cdot h_2=c\cdot \ell_2$.
 Therefore there is a unique map $A\to TX\times TY\times TZ$ making the whole diagram commute, which means that \eqref{assoc_pullback} is a pullback.

 $\ref{condpullback}\Rightarrow\ref{condgroup}$:
 If $T$ is weakly affine, then taking $X = Y = Z = 1$ in~\eqref{assoc_pullback} shows that this monoid must be an abelian group: we obtain a unique arrow $\freccia{T1}{\iota}{T1}$ making the following diagram commute
	\[
		\begin{tikzcd}
			T1 \ar{dr}[description]{(\id,\iota,\id)} \ar[bend left=15]{drrr}{(\id,\eta_1 \mathrm{del}_{T1})} \ar[bend right,swap]{dddr}{(\eta_1 \mathrm{del}_{T1},\id)} \\
			&	T1 \times T1 \times T1 \ar{r}{\id \times c_{1,1}} \ar[swap]{d}{c_{1,1} \times \id}	& T1 \times T(1\times 1) \ar{d}{c_{1,1\times 1}} \ar{r}{\cong} & T1\times T1 \ar{d}{c_{1,1}}	\\
			&	T(1\times 1) \times T1 \ar{r}{c_{1\times 1,1}} \ar{d}[swap]{\cong}	& T(1\times 1\times 1) \ar{d}{\cong} \ar{r}{\cong} & T(1\times 1) \ar{d}{\cong} \\
			& T1\times T1 \ar{r}[swap]{c_{1,1}} & T(1\times 1) \ar{r}[swap]{\cong} & T1
		\end{tikzcd}
	\]
	and the commutativity shows that $\iota$ satisfies the equations making it the inversion map for a group structure.
\end{proof}

\begin{myexample}
 In the Kleisli category of the measure monad $\mathrm{Kl}_M$ (which is not weakly affine) consider the following diagram
 \[
		\hspace{-9pt}	
		\begin{tikzcd}[column sep=3.3pc]
			MX \times MY \times MZ \ar{r}{\id \times c_{Y,Z}} \ar[swap]{d}{c_{X,Y} \times \id}	& MX \times M(Y \times Z) \ar{d}{c_{X,Y \times Z}}	\\
			M(X \times Y) \times MZ \ar{r}{c_{X\times Y,Z}}						& M(X \times Y \times Z)
		\end{tikzcd}
	\]
	In the top-right corner $MX\times M(Y\times Z)$, take the pair $(0,p)$ where $p$ is any non-zero measure on $Y\times Z$, and similarly, in the bottom-left corner take the pair $(q,0)$ where $q$ is any non-zero measure on $X\times Y$. Following the diagram, both pairs are mapped to the zero measure in the bottom-right corner. If the diagram was a pullback, we would be able to express the top-right and bottom-left corners as coming from the same triple in $MX\times MY\times MZ$, that is, there would exist a measure $m$ on $Y$ such that $m\cdot 0=p$ and $0\cdot m=q$. Since $p$ and $q$ are non-zero, this is not possible.
\end{myexample}
\begin{myremark}
It is worth noting that the pullback condition on the associativity square is not equivalent to the localised independence property of \Cref{local}: recall that a zero measure always exhibits conditional independence of all its outputs (\Cref{zeromeasure}).
Therefore, for zero measures, the localised independence property is always trivially valid, and hence the Kleisli category of the measures monad $M$ satisfies it in general.
However, the example above shows explicitly that the pullback property fails.

For now it is an open question whether the localised independence property for a Kleisli category is reflected by an equivalent condition on the monad.
\end{myremark}

\section{Conclusions and future work}
\label{secfurther}

Our paper introduces weakly Markov categories and weakly affine monads and explore their relationship.
More explicitly, our main result (\Cref{mainthm}) establishes a tight correspondence between the algebraic properties
of $T1$ and the universal properties of certain commutative squares given by the structural arrows of $T$
for a commutative monad $T$ on a cartesian category.
We believe that this theorem suggests at least two directions 
\begin{itemize}
\item generalising the statement to weakly affine monads on weakly Markov categories;
\item generalising other Markov-categorical notions, such as the positivity axiom, to weakly Markov or even gs-monoidal categories.
\end{itemize}
We will provide further details on these potential directions in what follows.
\smallskip

\textbf{Regarding possible generalisations.} In \Cref{mainthm}, we provide a characterisation of weakly affine monads on cartesian monoidal categories. Taking inspiration from the case of affine monads on Markov categories~\cite[Corollary~3.2]{Fritz_2020}, it seems natural to consider whether our main result can be extended to commutative monads on \emph{weakly Markov categories}.

However, this problem is non-trivial and requires clever adjustments to the main definitions. The crucial point is that, in general, the structure of the internal group of $T1$ and the structure of the group $\mD(X,T1)$ are not necessarily related in the current definitions. One approach could be to introduce a form of \emph{compatibility} for $T1$ and $\mD(X,T1)$ by defining a weakly affine monad on a weakly Markov category as a commutative monad such that $T1$ is an internal group and $\mD(X,T1)$ is a group with the composition and units induced by those of $T1$. With this change, for example, \Cref{weaklyboth} would work for any weakly Markov category, but \Cref{mainthm} would likely fail as its proof involves the universal property of products.
\smallskip

\textbf{On the positivity axiom.}
A strong monad $T$ on a cartesian monoidal category is \emph{strongly affine}~\cite{Jacobs16} if for every pair of objects $X$ and $Y$ the following diagram is a pullback
 \[
  \begin{tikzcd}
   X \times TY \ar{d}{\pi_1} \ar{r}{s} & T(X\times Y) \ar{d}{T\pi_1} \\
   X \ar{r}{\eta} & TX
  \end{tikzcd}
 \]
where $s$ denotes the strength and $\eta$ denotes the unit of the monad. Every strongly affine monad is affine.
The corresponding condition on the Markov category $\mathrm{Kl}_T$ has recently been characterised as an information flow axiom called \emph{positivity}~\cite[Section~2]{fritz2022dilations}.

For a generic commutative monad, the diagram above may even fail to commute (take the measure monad $M$ and start with $(x,0)$ in the top left corner). One can however consider the following diagram, which reduces to the one above (up to isomorphism) in the affine case
\[
 \begin{tikzcd}
   X \times TY \ar{d}{\id\times T(\mathrm{del}_{Y})} \ar{r}{s} & T(X\times Y) \ar{d}{T(\id\times \mathrm{del}_Y)} \\
   X \times T1 \ar{r}{s} & T(X\times 1) \cong TX
  \end{tikzcd}
\]
and which always commutes by naturality of the strength.
One can then call the monad $T$ \emph{positive} if this second diagram is a pullback. Upon defining \emph{positive gs-monoidal categories} analogously to positive Markov categories, one may conjecture that $T$ is positive if and only if $\mathrm{Kl}_T$ is positive.
This would generalise the existing result for Markov categories.

\bibliographystyle{plain}

\bibliography{biblio_davide}

\end{document}